\definecolor{verylight}{gray}{0.97}
\definecolor{light}{gray}{0.9}
\definecolor{medium}{gray}{0.85}
\definecolor{dark}{gray}{0.6}
 \def\NZQ{\mathbb}               
 \def\NN{{\NZQ N}}
 \def\frk{\mathfrak}               
 \def\mm{{\frk m}}
 \def\G{{\mathcal G}}
  \def\Mc{{\mathcal M}}
 \def\xb{{\mathbf x}}
 \def\opn#1#2{\def#1{\operatorname{#2}}} 
 \opn\chara{char} \opn\length{\ell} \opn\pd{pd} \opn\rk{rk}
 \opn\projdim{proj\,dim} \opn\injdim{inj\,dim} \opn\rank{rank}
 \opn\depth{depth} \opn\grade{grade} \opn\height{height}
 \opn\embdim{emb\,dim} \opn\codim{codim}
 \opn\Tr{Tr} \opn\bigrank{big\,rank}
 \opn\superheight{superheight}\opn\lcm{lcm}
 \opn\trdeg{tr\,deg}
 \opn\reg{reg} \opn\lreg{lreg} \opn\ini{in} \opn\lpd{lpd}
 \opn\size{size} \opn\sdepth{sdepth}
 \opn\link{link}\opn\fdepth{fdepth}\opn\lex{lex}
 \opn\tr{tr}
 \opn\type{type}
 \opn\gap{gap}
 \opn\arithdeg{arith-deg}
 \opn\div{div} \opn\Div{Div} \opn\cl{cl} \opn\Cl{Cl}
 \opn\Spec{Spec} \opn\Supp{Supp} \opn\supp{supp} \opn\Sing{Sing}
 \opn\Ass{Ass} \opn\Min{Min}\opn\Mon{Mon}
 \opn\Ann{Ann} \opn\Rad{Rad} \opn\Soc{Soc}
 \opn\Im{Im} \opn\Ker{Ker} \opn\Coker{Coker} \opn\Am{Am}
 \opn\Hom{Hom} \opn\Tor{Tor} \opn\Ext{Ext} \opn\End{End}
 \opn\Aut{Aut} \opn\id{id}
 \opn\nat{nat}
 \opn\pff{pf}
 \opn\Pf{Pf} \opn\GL{GL} \opn\SL{SL} \opn\mod{mod} \opn\ord{ord}
 \opn\Gin{Gin} \opn\Hilb{Hilb}\opn\sort{sort}
 \opn\PF{PF}\opn\Ap{Ap}
 \opn\mult{mult}
 \opn\bight{bight}
 \opn\aff{aff}
 \opn\relint{relint} \opn\st{st}
 \opn\lk{lk} \opn\cn{cn} \opn\core{core} \opn\vol{vol}  \opn\inp{inp} \opn\nilpot{nilpot}
 \opn\link{link} \opn\star{star}\opn\lex{lex}\opn\set{set}
 \opn\width{wd}
 \opn\Fr{F}
 \opn\QF{QF}
 \opn\G{G}
 \opn\type{type}\opn\res{res}
 \opn\conv{conv}
 \opn\Ind{Ind}
 \opn\gr{gr}
 \def\pot#1#2{#1[\kern-0.28ex[#2]\kern-0.28ex]}
 \opn\dirlim{\underrightarrow{\lim}}
 \opn\inivlim{\underleftarrow{\lim}}
 \let\union=\cup
 \let\iso=\cong
 \let\Dirsum=\bigoplus
 \let\to=\rightarrow
 \def\Implies{\ifmmode\Longrightarrow \else
         \unskip${}\Longrightarrow{}$\ignorespaces\fi}
 \def\implies{\ifmmode\Rightarrow \else
         \unskip${}\Rightarrow{}$\ignorespaces\fi}
 \def\iff{\ifmmode\Longleftrightarrow \else
         \unskip${}\Longleftrightarrow{}$\ignorespaces\fi}
 \newtheorem{Theorem}{Theorem}[section]
 \newtheorem{Lemma}[Theorem]{Lemma}
 \newtheorem{Corollary}[Theorem]{Corollary}
 \newtheorem{Proposition}[Theorem]{Proposition}
 \newtheorem{Remark}[Theorem]{Remark}
 \newtheorem{Remarks}[Theorem]{Remarks}
 \newtheorem{Definition}[Theorem]{Definition}
 \let\epsilon\varepsilon
 \let\kappa=\varkappa
 \def\qed{\ifhmode\textqed\fi
       \ifmmode\ifinner\quad\qedsymbol\else\dispqed\fi\fi}
 \def\textqed{\unskip\nobreak\penalty50
        \hskip2em\hbox{}\nobreak\hfil\qedsymbol
        \parfillskip=0pt \finalhyphendemerits=0}
 \def\dispqed{\rlap{\qquad\qedsymbol}}
 \opn\dis{dis}
 \def\pnt{{\raise0.5mm\hbox{\large\bf.}}}
 \opn\Lex{Lex}
\begin{document}

\title {Sortable simplicial complexes and $t$-independence ideals of proper interval graphs}

\author {J\"urgen Herzog, Fahimeh Khosh-Ahang, Somayeh Moradi and  Masoomeh Rahimbeigi}

\address{J\"urgen Herzog, Fachbereich Mathematik, Universit\"at Duisburg-Essen, Campus Essen, 45117
Essen, Germany} \email{juergen.herzog@uni-essen.de}

\address{Fahimeh Khosh-Ahang, Department of Mathematics, School of Science, Ilam University,
P.O.Box 69315-516, Ilam, Iran}
\email{f.khoshahang@ilam.ac.ir}

\address{Somayeh Moradi, Department of Mathematics, School of Science, Ilam University,
P.O.Box 69315-516, Ilam, Iran}
\email{so.moradi@ilam.ac.ir}

\address{Masoomeh Rahimbeigi, Department of Mathematics, University of Kurdistan, Post
Code 66177-15175, Sanandaj, Iran}
\email{rahimbeigi$_{-}$masoome@yahoo.com}

\dedicatory{ }

\begin{abstract}
We introduce the notion of sortability and $t$-sortability for a simplicial complex and study the graphs for which their independence complexes are either sortable or $t$-sortable. We show that the proper interval graphs are precisely the graphs whose independence complex is sortable. By using this characterization, we show that the ideal generated by all squarefree monomials corresponding to independent sets of vertices of $G$ of size $t$ (for a given positive integer $t$) has the strong persistence property, when $G$ is a proper interval graph. Moreover, all of its powers have linear quotients.
\end{abstract}

\thanks{}

\subjclass[2010]{Primary 13F20, 05E45; Secondary  13H10}


\keywords{proper interval graph, sortable simplicial complex, strong persistence property, $t$-independence ideal}

\maketitle

\setcounter{tocdepth}{1}

\section*{Introduction}

The notion of strong persistence property for an ideal in a Noetherian ring $R$ has been defined in \cite{HQ}. It is known that any monomial ideal with the strong persistence property has the persistence property (see \cite{HQ}). Although finding ideals with the strong persistence property is of great interest, but there is not much known about them.
Few classes of monomial ideals are known to possess this property. Polymatroidal ideals (\cite{HRV}) and  edge ideals of graphs (\cite{MMV})  are some of these families.
In this paper, we introduce a new class of monomial ideals associated to proper interval graphs with the strong persistence property. To this aim, we introduce the notion of a sortable simplicial complex and show that the independence complex of a graph $G$ is sortable if and only if $G$ is a proper interval graph. Using this characterization, we obtain some algebraic properties of the $t$-independence ideal $I_t(G)$ generated by all squarefree monomials corresponding to independent sets of vertices of $G$ of size $t$, when $G$ is a proper interval graph. It is proved that this ideal has the strong persistence property.
Moreover, when $G$ is a proper interval graph or an $n$-cycle, it is shown that the toric ring $K[u\: u\in \mathcal{G}(I_t(G))]$ over the field $K$ is Koszul and a normal Cohen--Macaulay domain.

We recall some definitions and notation that are needed in the sequel.
Let $G$ be any finite simple graph on the vertex set $V$. A subset $F\subseteq V$ is called an {\em independent set} of $G$ if it contains no edge of $G$. The set of all independent sets of $G$ forms a simplicial complex $\Delta(G)$, which is called the {\em independence complex} of $G$. For a graph $G$ on the vertex set $[n]$, a subset $A\subseteq [n]$ is called an {\em interval in $G$}, if $A=\{r,r+1,\ldots,s\}$ for some $r\leq s$. The set of all vertices adjacent to a vertex $v$ in $G$ is denoted by $N_G(v)$ and by $N_G[v]$ we mean $N_G(v)\cup\{v\}$. The path graph and the cycle graph with $n$ vertices are denoted by $P_n$ and $C_n$, respectively.

A graph $G$ is called an {\em interval graph}, if one can label its vertices with some intervals on the real line so that two vertices are adjacent in $G$, when the intersection of their corresponding intervals is non-empty. A {\em proper interval graph} is an interval graph such that no interval properly contains another. Proper interval graphs are well studied in the literature, see for example \cite{BW,G,LO,R1}. In this paper we give another characterization of these graphs in terms of the sortability of their independence complexes.

Let $S=K[x_{1},\ldots, x_{n}]$ be a polynomial ring over a field $K$ and $u$ and $v$ be two monomials of degree $d$ in $S$. Write $uv=x_{i_1}x_{i_2}\cdots x_{i_{2d}}$ with $1\leq i_1\leq i_2\leq \cdots\leq i_{2d}$, and
set $u^\prime=x_{i_1}x_{i_3}\cdots x_{i_{2d-1}}$ and $ v^\prime =x_{i_{2}}x_{i_4}\cdots x_{i_{2d}}.$ The pair
$(u^\prime,v^\prime)$ is called the \emph{sorting} of $(u,v)$ and is denoted by $\sort(u,v).$ Note that if $u$ and $v$ are squarefree, then $u'$ and $v'$ are squarefree, as well. The pair $(u,v)$ is called a {\em sorted pair},  if $\sort(u,v)=(u,v)$.  Otherwise,  $(u,v)$ is called an {\em unsorted pair}. Let $S_d$ be the set of all monomials of degree $d$ in $S$. A subset $\Mc\subset S_d$ is called \emph{sortable} if $\sort(u,v)\in \Mc\times \Mc$ for all $(u,v)\in \Mc\times \Mc.$  We say that a monomial ideal $I$ is sortable, if it is generated in a single degree and $\mathcal{G}(I)$ is a sortable set of monomials, where  $\mathcal{G}(I)$ is the set of minimal monomial generators of $I$.

The paper proceeds as follows. In Section \ref{secone}, we introduce and study sortable and $t$-sortable simplicial complexes. As one of the main results of this section, we give a new characterization of proper interval graphs by means of sortability concept (see Theorem \ref{obvious}). Moreover, we prove that the  independence complex of any cycle graph is $t$-sortable. In Section \ref{sectwo}, we consider the $t$-independence ideals of proper interval graphs and prove that these ideals satisfy the $\ell$-exchange property and consequently the strong persistence property. Finally we show that for any ideal in this class, all of its powers have linear quotients and hence linear resolutions.

\section{Sortable simplicial complexes}
\label{secone}

Let $\Delta$ be a (finite) simplicial complex on the vertex set $V(\Delta)\subset \NN$. For any finite set $F\subset \NN$, we associate with $F$ the monomial $\xb^F=\prod_{i\in F}x_i$.

Given two faces $F,G\in \Delta$ with $|F|=r$ and $|G|=s$ we write
\[
\xb^F\xb^G=x_{i_1}x_{i_2}\cdots x_{i_{r+s}} \quad \text{with}\quad i_1\leq i_2\leq \cdots \leq i_{r+s}.
\]
We define the {\em sorting operator} as follows:
\[
\sort(F,G)=(F',G'),
\]
where $F'=\{i_k:\ 1\leq k\leq r+s, k\textrm{ is odd}\}$ and $G'=\{i_k:\ 1\leq k\leq r+s, k\textrm{ is even}\}$.

Notice that $|F'|=|G'|$ if $r+s$ is even, and $|F'|=|G'|+1$ if $r+s$ is odd.

\begin{Definition}
{\em Let $\Delta$ be a finite simplicial complex with $V(\Delta)\subset \NN$.  Then  $\Delta$ is {\em sortable with respect to the given labeling} on $V(\Delta)$, if  for any $F,G\in \Delta$, one has $\sort(F,G)\in \Delta\times\Delta$. Moreover,  $\Delta$ is {\em sortable}, if it is sortable with respect to some suitable labeling with integers on $V(\Delta)$.}
\end{Definition}

A weaker property than sortability which is called $t$-sortability is defined as follows.
\begin{Definition}
{\em Let $t$ be a positive integer. A finite simplicial complex $\Delta$ with $V(\Delta)\subset \NN$ is called {\em $t$-sortable with respect to the given labeling} on $V(\Delta)$, if  for any $F,G\in \Delta$ with $|F|=|G|=t$ we have $\sort(F,G)\in \Delta\times\Delta$. Moreover,  $\Delta$ is {\em $t$-sortable}, if it is $t$-sortable with respect to some suitable labeling with integers on $V(\Delta)$.}
\end{Definition}

Note that if $\Delta$ is $t$-sortable and $F,G\in \Delta$ such that $|F|=|G|=t$ and $\sort(F,G)=(F',G')$, then $|F'|=|G'|=t$.

We have the following simple observations.

\begin{Remarks}
\label{restriction}
{\em Let $\Delta$ be a finite simplicial complex with $V(\Delta)\subset \NN$.

 \begin{enumerate}
\item[(i)] If $\Delta$ is sortable  ($t$-sortable), then for any $T\subset V(\Delta)$, the simplicial complex $\Delta_T=\{F\in \Delta \:\; F\subset T\}$ is also sortable  ($t$-sortable).
\item[(ii)]   If $\Delta$ is sortable, then it is $t$-sortable for any positive integer $t$.
\item[(iii)] The converse of (ii) does not hold in general. Indeed for any $n\geq 4$, $\Delta(C_n)$ is $t$-sortable for all $t$ and is not sortable (see Theorem \ref{obvious} and Proposition \ref{standard}).
\end{enumerate}
}
\end{Remarks}

Recall that if $\Delta_1$ and $\Delta_2$ are simplicial complexes on disjoint sets of vertices, then the {\em join} of $\Delta_1$ and $\Delta_2$ denoted by  $\Delta_1*\Delta_2$ is a simplicial complex on the vertex set $V(\Delta_1*\Delta_2)=V(\Delta_1)\union V(\Delta_2)$ defined as $\Delta_1*\Delta_2=\{F\union G\:\; F\in \Delta_1 \text{ and } G\in \Delta_2\}$.


\begin{Proposition}
\label{join}
Let $\Delta_1$ and $\Delta_2$ be simplicial complexes on disjoint sets of vertices.
Then $\Delta_1*\Delta_2$ is sortable, if and only if $\Delta_1$ and $\Delta_2$ are sortable.
\end{Proposition}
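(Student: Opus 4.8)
The plan is to prove the two implications separately, with the forward (``if'') direction carrying the real content.

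For the converse (``only if''), I would observe that $\Delta_1$ is exactly the restriction $(\Delta_1*\Delta_2)_{V(\Delta_1)}$: a face $F\union G$ of the join (with $F\in\Delta_1$, $G\in\Delta_2$) is contained in $V(\Delta_1)$ precisely when $G=\emptyset$, since the two vertex sets are disjoint, so the restriction recovers $\Delta_1$. By part (i) of Remarks \ref{restriction}, a restriction of a sortable complex is sortable, hence $\Delta_1$ is sortable, and symmetrically so is $\Delta_2$.

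For the forward direction the key is to choose the labeling of $V(\Delta_1*\Delta_2)=V(\Delta_1)\union V(\Delta_2)$ well. Since sortability depends only on the linear order that the labeling induces on the vertices — an order-preserving relabeling commutes with $\sort$ — I can fix labelings making $\Delta_1$ and $\Delta_2$ sortable and then merge them into a single \emph{separated} labeling in which every label used on $V(\Delta_1)$ is smaller than every label used on $V(\Delta_2)$, while the induced orders on $V(\Delta_1)$ and on $V(\Delta_2)$ are left unchanged. The point of separation is that for any two faces $A=F_1\union F_2$ and $B=G_1\union G_2$ of the join (with $F_i,G_i\in\Delta_i$), when the multiset of indices of $\xb^A\xb^B$ is arranged in increasing order it splits into two consecutive blocks: a first block of length $p=|F_1|+|G_1|$ consisting of the sorted $V(\Delta_1)$-indices, followed by a block of length $q=|F_2|+|G_2|$ of sorted $V(\Delta_2)$-indices. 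The odd/even selection defining $\sort(A,B)=(A',B')$ then restricts, within each block, to the selection defining $\sort(F_1,G_1)=(F_1',G_1')$ and $\sort(F_2,G_2)=(F_2',G_2')$.

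The main obstacle — really the only subtlety — is a parity bookkeeping at the junction of the two blocks: the global parity in the second block agrees with the local parity exactly when $p$ is even. Concretely, if $p$ is even one gets $A'=F_1'\union F_2'$ and $B'=G_1'\union G_2'$, whereas if $p$ is odd the roles swap in the second block and one gets $A'=F_1'\union G_2'$ and $B'=G_1'\union F_2'$. I would record both cases and note that in either case $A'$ and $B'$ are each a union of a face of $\Delta_1$ with a face of $\Delta_2$ — using that $F_1',G_1'\in\Delta_1$ and $F_2',G_2'\in\Delta_2$ by the sortability of the two complexes — and hence lie in $\Delta_1*\Delta_2$. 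This shows that the separated labeling makes the join sortable and completes the argument.
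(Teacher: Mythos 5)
Your proposal is correct and follows essentially the same route as the paper: the same separated labeling with all of $V(\Delta_1)$ preceding $V(\Delta_2)$, the same parity case analysis yielding $\sort(F_1\cup F_2,G_1\cup G_2)=(F_1'\cup F_2',\,G_1'\cup G_2')$ or $(F_1'\cup G_2',\,G_1'\cup F_2')$ according to whether $|F_1|+|G_1|$ is even or odd. Your converse via the restriction $(\Delta_1*\Delta_2)_{V(\Delta_1)}=\Delta_1$ and Remarks~\ref{restriction}(i) is just a repackaging of the paper's inline argument that $F',G'\subseteq V(\Delta_1)$ forces $F',G'\in\Delta_1$.
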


\begin{proof}
Let $\Delta_1$ and $\Delta_2$ be sortable. One may consider sorting labelings  on the vertices of $\Delta_1$ and $\Delta_2$ in $\NN$ such that $i<j$ for all $i\in V(\Delta_1)$ and $j\in V(\Delta_2)$.
Consider two elements $F_1\cup F_2$ and $G_1\cup G_2$ in $\Delta_1*\Delta_2$ with $F_1,G_1\in \Delta_1$ and $F_2,G_2\in \Delta_2$. Let $(F'_1,G'_1)=\sort(F_1,G_1)$ and $(F'_2,G'_2)=\sort(F_2,G_2)$. One can see that if $|F_1|+|G_1|$ is even, then $\sort(F_1\cup F_2,G_1\cup G_2)=(F'_1\cup F'_2,G'_1\cup G'_2)$ and if $|F_1|+|G_1|$ is odd, then
$\sort(F_1\cup F_2,G_1\cup G_2)=(F'_1\cup G'_2,G'_1\cup F'_2)$. Since $F'_1,G'_1\in \Delta_1$ and $F'_2,G'_2\in \Delta_2$, $\Delta_1*\Delta_2$ is sortable.

Conversely, let $\Delta_1*\Delta_2$ be sortable. For any two faces $F,G\in \Delta_1$, since $F,G\in \Delta_1*\Delta_2$, we have $F',G'\in \Delta_1*\Delta_2$, where $(F',G')=\sort(F,G)$. Note that $F',G'\subseteq V(\Delta_1)$ and $ V(\Delta_1)\cap V(\Delta_2)=\emptyset$. This implies that $F',G'\in \Delta_1$.
By similar argument $\Delta_2$ is also sortable.
\end{proof}

Let $G$ be the disjoint union of two graphs $G_1$ and $G_2$. Then
\[
\Delta(G)=\Delta(G_1)*\Delta(G_2).
\]
Thus, we may  apply Proposition~\ref{join} and  obtain

\begin{Corollary}
\label{components}
Let $G$ be a finite simple graph with vertices in $\NN$ and $G_1, \ldots,G_m$ be the connected components of $G$. Then $\Delta(G)$ is sortable, if and only if each $\Delta(G_r)$ is sortable.
\end{Corollary}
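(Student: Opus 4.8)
The plan is to derive Corollary~\ref{components} directly from Proposition~\ref{join} by an induction on the number of connected components $m$, after establishing the key structural fact that the independence complex of a disjoint union of graphs is the join of the individual independence complexes. First I would verify the displayed identity $\Delta(G)=\Delta(G_1)*\Delta(G_2)$ when $G=G_1\sqcup G_2$: a set $F\subseteq V(G_1)\cup V(G_2)$ is independent in $G$ precisely when $F\cap V(G_1)$ is independent in $G_1$ and $F\cap V(G_2)$ is independent in $G_2$, since $G$ has no edges between $V(G_1)$ and $V(G_2)$. This is exactly the statement that $F$ decomposes as a union $F_1\cup F_2$ with $F_1\in\Delta(G_1)$ and $F_2\in\Delta(G_2)$, which is the definition of the join. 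The excerpt states this identity just before the corollary, so I may take it as given.

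With this identity in hand, I would proceed by induction on $m$. The base case $m=1$ is trivial since $\Delta(G)=\Delta(G_1)$. For the inductive step, write $G=G'\sqcup G_m$, where $G'=G_1\sqcup\cdots\sqcup G_{m-1}$. Since the connected components live on pairwise disjoint vertex sets, the identity gives $\Delta(G)=\Delta(G')*\Delta(G_m)$. By Proposition~\ref{join}, $\Delta(G)$ is sortable if and only if both $\Delta(G')$ and $\Delta(G_m)$ are sortable. Applying the induction hypothesis to $G'$, which has $m-1$ components, $\Delta(G')$ is sortable if and only if each $\Delta(G_r)$ for $1\le r\le m-1$ is sortable. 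Combining these two equivalences yields that $\Delta(G)$ is sortable if and only if each $\Delta(G_r)$ for $1\le r\le m$ is sortable, completing the induction.

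I do not expect a genuine obstacle here, as the result is essentially a formal consequence of Proposition~\ref{join} together with the join decomposition. The only point requiring mild care is the inductive step: one must be sure that the vertex sets are pairwise disjoint so that the join decomposition $\Delta(G')*\Delta(G_m)$ is legitimate (the join operation in the excerpt is defined only for simplicial complexes on disjoint vertex sets). Since $G_1,\dots,G_m$ are the connected components of a single graph $G$, their vertex sets are automatically pairwise disjoint, so $V(G')\cap V(G_m)=\emptyset$ and Proposition~\ref{join} applies verbatim. This handles the potential concern and no further calculation is needed.
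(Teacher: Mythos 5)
Your proposal is correct and matches the paper's argument exactly: the paper likewise derives the corollary from the identity $\Delta(G_1\sqcup G_2)=\Delta(G_1)*\Delta(G_2)$ together with Proposition~\ref{join}, leaving the extension to $m$ components implicit where you spell out the induction. The disjointness check you add is a sound (if routine) point of care and nothing more is needed.
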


\begin{Remark}
{\em If we replace sortability by $t$-sortability in Corollary \ref{components}, the `only if' part holds by Remarks \ref{restriction}(i).
But the `if' part does not hold in general. For example, consider a graph $G$ with two connected components $G_1$ and $G_2$, where $G_1$ is a star graph on $4$ vertices and $G_2$ is a path graph on $4$ vertices. Then by CoCoA computations one can see that the defining ideal of the fiber ring of $I_3(G)=\langle x^F:\ F\in \Delta(G), |F|=3\rangle$ is not quadratic. So by Theorem \ref{crucial}, $I_3(G)$ is not a sortable ideal. Hence $\Delta(G)$ is not $3$-sortable. But it is easy to see that $\Delta(G_1)$ and $\Delta(G_2)$ are $3$-sortable.}
\end{Remark}

\medskip

The following lemma states some equivalent conditions for a graph to be proper interval. We use this result in Theorem \ref{obvious} to characterize the graphs whose independence complexes are sortable.
\begin{Lemma}\label{def}
For a graph $G$ on the vertex set $[n]$, the following conditions are equivalent:
\begin{enumerate}
\item[(i)] For all $i<j$, $\{i, j\}\in E(G)$ implies that the induced subgraph of $G$ on $\{i, i+1, \dots ,j\}$ is a clique.
\item[(ii)]  For all $1\leq i \leq n$, $N_{G^i}[i]$ is both a clique and an interval, where $G^i$ is the induced subgraph of $G$ on $\{i,i+1, \dots, n\}$.
\item[(iii)] For all $1\leq i \leq n$, $N_{G_i}[i]$ is both a clique and an interval,  where $G_i$ is the induced subgraph of $G$ on $\{1,2, \dots,i\}$.
\item[(iv)] For all  $1\leq i \leq n$, $N_{G}[i]$ is an interval.
\item[(v)]  $G$ is a proper interval graph.
\end{enumerate}
\end{Lemma}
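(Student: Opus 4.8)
The plan is to prove conditions $(i)$–$(iv)$ equivalent \emph{as statements about the given labeling of the vertices by $[n]$}, via the implications $(i)\Rightarrow(ii)\Rightarrow(iv)\Rightarrow(i)$ together with the symmetric pair $(i)\Rightarrow(iii)\Rightarrow(iv)$, and only afterwards to connect them to the labeling-free property $(v)$. All four of the first conditions are variants of the single idea that closed neighbourhoods are consecutive blocks of integers (and complete), so I expect each implication to be short once the extremal neighbours are named.

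For the combinatorial core I would argue as follows. For $(i)\Rightarrow(ii)$ fix $i$ and put $b_i=\max N_G[i]$; if $j\in N_{G^i}(i)$ then $\{i,j\}\in E(G)$ with $i<j$, so by $(i)$ the block $\{i,\dots,j\}$ is a clique, which simultaneously forces every integer between $i$ and $b_i$ into $N_{G^i}[i]$ and forces any two of them to be adjacent; hence $N_{G^i}[i]=\{i,\dots,b_i\}$ is both an interval and a clique. The implication $(i)\Rightarrow(iii)$ is identical after reflecting the order by $k\mapsto n+1-k$. For $(ii)\Rightarrow(iv)$, set $a_i=\min N_G[i]$ and $b_i=\max N_G[i]$; the part of $N_G[i]$ above $i$ is the interval $\{i,\dots,b_i\}$ by $(ii)$ applied to $i$, while for the part below $i$ I apply $(ii)$ to the vertex $a_i$: since $\{a_i,i\}\in E(G)$, the clique-interval $N_{G^{a_i}}[a_i]$ contains every $l$ with $a_i\le l\le i$, whence $\{l,i\}\in E(G)$ and $l\in N_G[i]$, so $N_G[i]=\{a_i,\dots,b_i\}$ is an interval. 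The implication $(iii)\Rightarrow(iv)$ is again the reflection. Finally, for $(iv)\Rightarrow(i)$, given an edge $\{i,j\}$ with $i<j$, condition $(iv)$ makes $N_G[i]$ an interval containing $i$ and $j$, hence $\supseteq\{i,\dots,j\}$, so $i$ is adjacent to all of $\{i+1,\dots,j\}$; applying $(iv)$ once more to each $l\in\{i+1,\dots,j\}$ (whose neighbourhood interval then contains $\{i,\dots,l\}$) shows that $l$ is adjacent to every smaller vertex of the block, and ranging over all $l$ yields that $\{i,\dots,j\}$ is a clique.

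It remains to link $(i)$–$(iv)$ with $(v)$. For $(i)\Rightarrow(v)$ I would produce an explicit proper interval representation from the labeling: first note, using $(i)$, that $b_i=\max N_G[i]$ is nondecreasing (the only case to check is $i<i'<b_i$, where the clique $\{i,\dots,b_i\}$ gives $\{i',b_i\}\in E(G)$, so $b_{i'}\ge b_i$), then assign to vertex $i$ the interval $I_i=[\,i,\;b_i+\tfrac{i}{n+1}\,]$. Since an integer $j$ satisfies $j\le b_i+\tfrac{i}{n+1}\iff j\le b_i$, one checks $I_i\cap I_j\ne\emptyset\iff\{i,j\}\in E(G)$ for $i<j$, and because both endpoints strictly increase in $i$ no $I_i$ can properly contain another, so $G$ is a proper interval graph. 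For the converse I would start from an arbitrary proper interval representation $\{[l_v,r_v]\}$ of $G$; the absence of proper containments forces the orderings by left and by right endpoints to coincide, so after relabelling the vertices $1,\dots,n$ by increasing endpoints, the joint monotonicity of $l_j$ and $r_j$ makes each $N_G[i]=\{j:\ l_i\le r_j\text{ and }l_j\le r_i\}$ a set of consecutive integers, i.e.\ condition $(iv)$ holds. The one genuinely delicate point is this last step: conditions $(i)$–$(iv)$ are properties of a \emph{fixed} labeling whereas $(v)$ is labeling-free, so the passage $(v)\Rightarrow(iv)$ must be read as the statement that a proper interval graph admits a labeling (the endpoint order) satisfying $(iv)$; I would make this reordering explicit and verify with care the claim that no proper containment forces the two endpoint orders to agree, since this is exactly what guarantees consecutiveness of the neighbourhoods.
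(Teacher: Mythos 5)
Your proof is correct, but it takes a genuinely different route from the paper's in two respects. First, the implication structure among (i)--(iv) differs: the paper proves the linear chain (i) $\Rightarrow$ (ii) $\Rightarrow$ (iii) $\Rightarrow$ (iv) $\Rightarrow$ (i), where the step (iii) $\Rightarrow$ (iv) requires a three-case analysis ($i<j<k$, $j<k<i$, $j\leq i\leq k$); you instead prove (ii) $\Rightarrow$ (iv) directly by applying the clique-interval property at the minimal neighbour $a_i$, and you obtain the (iii)-implications for free from the order-reversal symmetry $k\mapsto n+1-k$, which eliminates the case analysis entirely. Second, and more substantially, the paper does not prove (i) $\Leftrightarrow$ (v) at all --- it cites Looges--Olariu \cite{LO} --- whereas you prove both directions from scratch: the explicit representation $I_i=[\,i,\;b_i+\tfrac{i}{n+1}\,]$ (with the verification that $b_i=\max N_G[i]$ is nondecreasing, so both endpoint sequences strictly increase and no containments occur) and, conversely, the argument that properness forces the left- and right-endpoint orders to coincide, so that relabelling by endpoints yields consecutive closed neighbourhoods. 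Your self-contained treatment buys something real: it makes explicit the fixed-versus-existential labeling issue that the lemma's literal statement glosses over (conditions (i)--(iv) concern a given labeling, (v) is labeling-free, and the equivalence can only mean that \emph{some} labeling satisfies (i)--(iv)), which is exactly how the paper later uses the lemma in Theorem \ref{obvious}. One small point worth tightening in your (v) $\Rightarrow$ (iv) step: when endpoints tie, note that equal left endpoints force equal right endpoints (else one interval properly contains the other), so a consistent simultaneous ordering by both endpoints indeed exists; with that said, the monotonicity argument $l_j\leq l_{j_2}\leq r_i$ and $r_j\geq r_{j_1}\geq l_i$ for $j_1<j<j_2$ goes through as you state.
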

\begin{proof}
(i) $\Rightarrow$ (ii) Suppose that $1\leq i \leq n$ and  $j$ is the largest integer such that  $j\in N_{G^i}(i)$. Then $\{i, j\}\in E(G)$. So, by (i) the induced subgraph of $G$ on $\{i, i+1, \dots ,j\}$ is a clique. This proves the result.

(ii) $\Rightarrow$ (iii) Suppose that $1\leq i \leq n$ and  $j$ is the least integer such that  $j\in N_{G_i}(i)$. Then $i\in N_{G^j}(j)$. Thus (ii) implies that  the induced subgraph of $G$ on $\{j, j+1, \dots ,i\}$ is a clique. This shows that  $N_{G_i}[i]$ is a clique and an interval as desired.

(iii) $\Rightarrow$ (iv) Suppose that $1\leq i \leq n$ and $j,k\in N_G[i]$ with $j<k$. It is enough to prove that for each integer $\ell$ between $j$ and $k$, one has $\ell\in N_G[i]$. If $i<j<k$, then the result follows from $i\in N_{G_k}[k]$ and  the assumption that $N_{G_k}[k]$ is both clique and interval. If $j<k<i$, then the result follows from $j\in N_{G_i}[i]$ and  the assumption that $N_{G_i}[i]$ is an interval. Now, assume that $j\leq i\leq k$. then $i\in N_{G_k}[k]$ and $j\in N_{G_i}[i]$. Since  $N_{G_k}[k]$ is a clique and an interval and $N_{G_i}[i]$ is an interval, the result is obtained.

(iv) $\Rightarrow$ (i) Suppose that $i<j$ and $\{i,j\}\in E(G)$. Since $N_G[i]$ is an interval and $j\in N_G[i]$, $\{i+1, \dots , j\}\subseteq N_G[i]$. Now for each $i <\ell \leq j$, since $i\in N_G[\ell]$ and $N_G[\ell]$ is an interval, $\{i, i+1, \dots , \ell-1\}\subseteq N_G[\ell]$. This shows that the induced subgraph of $G$ on $\{i,i+1, \dots ,j\}$ is a clique.

(i) $\Leftrightarrow$ (v) See \cite[Theorem 1 and Proposition 1]{LO}.
\end{proof}

Property (iii) of Lemma \ref{def}, implies that any proper interval graph has a perfect elimination ordering and hence is a chordal graph.

\begin{Theorem}\label{obvious}
Let $G$ be a graph. Then $\Delta(G)$ is sortable if and only if $G$ is a proper interval graph.
\end{Theorem}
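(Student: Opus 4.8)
The plan is to work throughout with an explicit labeling of $V(G)$ and to play the two implications off against the equivalent conditions collected in Lemma~\ref{def}; concretely I would show that $G$ is a proper interval graph exactly when $\Delta(G)$ is sortable with respect to a labeling satisfying condition (i) of Lemma~\ref{def}. The first preparatory step is a \emph{local characterization of independent sets}: assuming the labeling satisfies (i), a set $A=\{a_1<a_2<\cdots<a_k\}$ is a face of $\Delta(G)$ if and only if $\{a_\ell,a_{\ell+1}\}\notin E(G)$ for every $\ell$. The nontrivial direction follows by taking a putative edge $\{a_p,a_q\}$ with $q-p$ minimal and larger than $1$; condition (i) forces the whole interval $[a_p,a_q]$ to be a clique, so in particular the consecutive pair $\{a_p,a_{p+1}\}$ would be an edge, a contradiction.

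For the implication \emph{proper interval $\Rightarrow$ sortable}, fix a labeling satisfying (i) and take faces $F,G$ with merged sorted sequence $c_1\le c_2\le\cdots\le c_{r+s}$, so that $F'=\{c_1,c_3,\dots\}$ and $G'=\{c_2,c_4,\dots\}$. Since each vertex occurs at most twice in the multiset $F\uplus G$, no value can occur three times, whence $c_{2\ell-1}<c_{2\ell+1}$ and $c_{2\ell}<c_{2\ell+2}$; thus $F'$ and $G'$ are genuine sets and it suffices, by the local characterization, to show that their consecutive entries are nonadjacent. Suppose instead that $\{c_{2\ell-1},c_{2\ell+1}\}\in E(G)$. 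By (i) the interval $[c_{2\ell-1},c_{2\ell+1}]$ is a clique, so any two distinct values among the three consecutive entries $c_{2\ell-1},c_{2\ell},c_{2\ell+1}$ are adjacent. Among these three entries two share the same origin ($F$ or $G$), and because at most one entry lies strictly between them in the merge, that pair is a pair of \emph{consecutive} elements of $F$ or of $G$; being adjacent, it contradicts the independence of $F$ or $G$ via the local characterization. The same argument applied to $c_{2\ell},c_{2\ell+1},c_{2\ell+2}$ handles $G'$, so $\sort(F,G)\in\Delta(G)\times\Delta(G)$.

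For the converse \emph{sortable $\Rightarrow$ proper interval}, suppose $\Delta(G)$ is sortable with respect to a labeling $L$; I would show $L$ satisfies (i) and then invoke Lemma~\ref{def}. The engine is a single three-element sort: if $a<c<b$ then $\sort(\{b\},\{a,c\})=(\{a,b\},\{c\})$ and $\sort(\{a\},\{c,b\})=(\{a,b\},\{c\})$, since in each case the merged word is $x_ax_cx_b$ and the odd positions produce $\{a,b\}$. Hence, for any edge $\{a,b\}$ with $a<b$ and any $c\in(a,b)$, neither $\{a,c\}$ nor $\{c,b\}$ can be a face: otherwise the corresponding sort would output the non-face $\{a,b\}$, violating sortability. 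This yields property (P): every vertex strictly between the endpoints of an edge is adjacent to both endpoints. A short bootstrap upgrades (P) to condition (i): given an edge $\{i,j\}$, (P) first makes $i$ adjacent to every vertex of $(i,j]$, and then, applying (P) to each resulting edge $\{i,q\}$, makes every $p\in(i,q)$ adjacent to $q$ as well, so that $[i,j]$ is a clique. By Lemma~\ref{def}, $G$ is a proper interval graph.

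The routine parts are the parity bookkeeping in the definition of $\sort$ and the bootstrap in the last paragraph. I expect the only genuinely delicate point to be the pigeonhole step in the forward direction: one must argue carefully, tracking the origin of repeated values, that two of the three consecutive merged entries are not merely from the same set but are in fact \emph{consecutive} in that set, since this is exactly where condition (i) and the local characterization combine to force the contradiction.
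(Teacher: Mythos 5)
Your proposal is correct and takes essentially the same route as the paper: both directions run through Lemma~\ref{def}, the forward implication via the clique condition (i) together with a pigeonhole among three entries of the merged sequence, and the converse via exactly the same three-element sorts $\sort(\{b\},\{a,c\})=(\{a,b\},\{c\})$ that the paper deploys contrapositively through condition (iv) rather than, as you do, by deriving condition (i) directly with a bootstrap. Your auxiliary ``local characterization'' lemma is a harmless reorganization, and the consecutiveness point you flag as delicate is actually unnecessary: once the pigeonhole yields two same-origin entries, they are distinct (faces are sets) and adjacent (clique on the interval), so \emph{any} such pair already contradicts the independence of $F$ or $G$, which is precisely how the paper argues.
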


\begin{proof}
Let $\Delta(G)$ be sortable and by contrary assume that $G$ is not proper interval. Then by Lemma \ref{def}, for any labeling on $V(G)$ there exists $i\in V(G)$
such that $N_G[i]$ is not an interval. This means that there exists $j,k\in N_G[i]$ and an integer $\ell$ with $j<\ell<k$ such that $\ell\notin N_G[i]$. If $i<\ell$, then
$\sort(\{i,\ell\},\{k\})=(\{i,k\},\{\ell\})$ and $\{i,k\}\notin \Delta(G)$, a contradiction. If $i>\ell$, then
$\sort(\{i,\ell\},\{j\})=(\{j,i\},\{\ell\})$ and $\{j,i\}\notin \Delta(G)$, which contradicts to sortability of $\Delta(G)$.


Conversely, suppose that $G$ is a proper interval graph. Then by Lemma \ref{def}, we may consider a labeling on $V(G)=[n]$ such that
for all $i<j$, $\{i, j\}\in E(G)$ implies that the induced subgraph of $G$ on $\{i, i+1, \dots ,j\}$ is a clique. Let $F_1,F_2\in \Delta(G)$ and assume that $x^{F_1}x^{F_2}=x_{i_1}x_{i_2}\cdots x_{i_{r+s}}$, where $i_1\leq i_2\leq \cdots \leq i_{r+s}$. Then $\sort(F_1,F_2)=(F'_1,F'_2)$, where $F'_1=\{i_1,i_3,\ldots,i_{r'}\}$ and $F'_2=\{i_2,i_4,\ldots,i_{s'}\}$ for some $r'$ and $s'$. By contradiction if $F'_1\notin \Delta(G)$, then $\{i_{2k-1},i_{2\ell-1}\}\in E(G)$ for some $k$ and $\ell$ with $k<\ell$. Since $i_{2k-1}\leq i_{2k}\leq i_{2\ell-1}$, by our assumption,  $\{i_{2k-1},i_{2k}\},\{i_{2k},i_{2\ell-1}\}\in E(G)$.
Note that at least two distinct vertices among $i_{2k-1},i_{2k},i_{2\ell-1}$ belong to either $F_1$ or $F_2$. This implies that either $F_1$ or $F_2$ contains an edge, a contradiction. Thus $F'_1\in \Delta(G)$. By similar argument $F'_2\in \Delta(G)$.
\end{proof}

\begin{Corollary}
\label{tree}
Let $G$  be a tree. Then $\Delta(G)$ is sortable, if and only if $G$ is a path graph.
\end{Corollary}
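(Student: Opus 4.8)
The plan is to combine Theorem~\ref{obvious} with the well-known structure of trees. Since a tree is connected, Corollary~\ref{components} is not directly needed; instead I would argue as follows.

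First, the ``if'' direction is immediate: a path graph $P_n$ is a proper interval graph (label its vertices $1,2,\dots,n$ consecutively along the path; then each $N_G[i]$ is the interval $\{i-1,i,i+1\}$, verifying condition (iv) of Lemma~\ref{def}), so by Theorem~\ref{obvious} its independence complex is sortable.

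For the ``only if'' direction, suppose $G$ is a tree whose independence complex is sortable. By Theorem~\ref{obvious}, $G$ is a proper interval graph, and by the remark following Lemma~\ref{def} it is chordal. The key observation is that a tree contains no cycle, so any clique in $G$ has at most two vertices; hence $G$ has no induced $K_{1,3}$ unless its three leaves are pairwise nonadjacent. The obstruction to being a path is precisely a vertex of degree at least $3$. So I would argue that a proper interval tree cannot have a vertex of degree $\geq 3$: if some vertex $v$ had three neighbors, then under the proper-interval labeling guaranteed by Lemma~\ref{def}(iv), $N_G[v]$ is an interval that is also (by a tree having no triangles) \emph{not} a clique once it contains three or more vertices, contradicting condition (ii) or (iii), which demand that the relevant closed neighborhoods be cliques. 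Concretely, a claw $K_{1,3}$ is the forbidden induced subgraph: a proper interval graph contains no induced $K_{1,3}$, but any tree with a vertex of degree $\geq 3$ contains an induced claw. Therefore every vertex of $G$ has degree at most $2$, and a connected acyclic graph with maximum degree $2$ is a path.

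The main obstacle, and the step requiring the most care, is making precise the claim that a proper interval graph contains no induced claw. I expect to deduce this cleanly from Lemma~\ref{def}: in a claw with center $v$ and leaves $a,b,c$, the closed neighborhood $N_G[v]$ contains $\{v,a,b,c\}$ with $a,b,c$ pairwise nonadjacent, so $N_G[v]$ is not a clique; but for the middle leaf (in the interval order) one checks via condition (ii) or (iii) that the relevant induced closed neighborhood $N_{G^i}[i]$ or $N_{G_i}[i]$ is forced to be a clique, yielding the contradiction. Once the claw-freeness is established, the conclusion that a claw-free tree is a path is elementary. Thus both directions follow, completing the proof.
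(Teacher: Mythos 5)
Your proof is correct, but it takes a genuinely different route from the paper's in the ``only if'' direction. The paper never invokes Theorem~\ref{obvious} there: it observes that a non-path tree contains an induced claw $K_{1,3}$ with center $i$ and leaves $j,l,m$, and then kills sortability directly with the sorting operator --- $\sort(\{i\},\{j,l,m\})$ produces two $2$-element faces, one of which contains the center $i$ together with a leaf and hence an edge of $G$; since sortability passes to restrictions $\Delta_T$ (Remarks~\ref{restriction}(i)), $\Delta(G)$ cannot be sortable. You instead apply Theorem~\ref{obvious} to conclude $G$ is a proper interval graph and then derive claw-freeness of proper interval graphs from Lemma~\ref{def}. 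Your pigeonhole step is sound: two of the three leaves lie on the same side of the center $v$ in the labeling, so both lie in $N_{G^v}[v]$ or both in $N_{G_v}[v]$, which conditions (ii)/(iii) force to be cliques, contradicting that a tree is triangle-free. One caution about your first formulation: condition (iv) makes $N_G[v]$ an interval, \emph{not} a clique, and indeed an interior vertex of a path already has a non-clique closed neighborhood of size $3$, so the contradiction must go through the one-sided neighborhoods of (ii)/(iii) --- which your final paragraph does correctly supply. As for trade-offs: the paper's argument is more elementary and self-contained (it needs only the definition of $\sort$ and Remarks~\ref{restriction}(i), and it actually shows the independence complex of the claw is unsortable under \emph{every} labeling, independent of Theorem~\ref{obvious}), while your route leans on the already-proved harder theorem and yields as a by-product the standard fact that proper interval graphs are claw-free, from which the tree case is immediate.
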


\begin{proof}

Since any path graph is a proper interval graph, by Theorem \ref{obvious}, $\Delta(G)$ is sortable.
Conversely, let $\Delta(G)$ be sortable and suppose that $G$ is not a path graph. Then $G$ contains an induced subgraph $H$ with three edges $\{i,j\}, \{i,l\}$ and $\{i, m\}$, for distinct vertices $i,j,l$ and $m$ of $G$. We show that $\Delta(H)$ is not sortable. Let $F=\{i\}$ and $K=\{j,l,m\}$. Then $\sort(F,K)=(F',K')$, where $|F'|=|K'|=2$ and one of $F'$ and $K'$ contains $i$. Thus we have either $F'\notin \Delta(H)$ or $K'\notin \Delta(H)$. So $\Delta(H)$ is not sortable. This contradicts to Remarks \ref{restriction}(i), noting that $\Delta(H)=\Delta(G)_{V(H)}$.
\end{proof}

By Corollaries \ref{components} and \ref{tree} one can get the following result.

\begin{Corollary}
\label{forest}
Let $G$  be a forest. Then $\Delta(G)$ is sortable, if and only if each tree of the forest is a path graph.
\end{Corollary}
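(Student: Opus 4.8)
The final statement is Corollary \ref{forest}, which characterizes forests whose independence complexes are sortable. The plan is straightforward: combine the two immediately preceding results. A forest is by definition a finite simple graph whose connected components are trees, so let $G$ be a forest with connected components $G_1,\ldots,G_m$, each of which is a tree.

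First I would invoke Corollary \ref{components}, which asserts that $\Delta(G)$ is sortable if and only if each $\Delta(G_r)$ is sortable. This reduces the question entirely to the level of the individual trees. Second, I would apply Corollary \ref{tree} to each component separately: for a tree $G_r$, the complex $\Delta(G_r)$ is sortable precisely when $G_r$ is a path graph. Chaining these two equivalences gives that $\Delta(G)$ is sortable if and only if every $\Delta(G_r)$ is sortable if and only if every component $G_r$ is a path graph, which is exactly the claim that each tree of the forest is a path graph.

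There is essentially no main obstacle here, since both ingredients are already established; the only point requiring a word of care is the logical bookkeeping of the biconditionals, ensuring that the "if and only if" passes cleanly through the finite conjunction over components. One should also note the implicit hypothesis, carried over from Corollary \ref{components}, that the vertices of $G$ lie in $\NN$ so that the sorting operator and the notion of sortable labeling are defined; this is harmless and may be assumed throughout. Thus the proof is a two-line deduction citing Corollaries \ref{components} and \ref{tree}.
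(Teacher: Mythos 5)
Your proof is correct and matches the paper exactly: the paper derives Corollary \ref{forest} by the same two-step combination of Corollary \ref{components} (reduction to connected components) and Corollary \ref{tree} (a tree has sortable independence complex if and only if it is a path). Nothing further is needed.
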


The independence complex of an $n$-cycle for $n\geq 4$ is not sortable by Theorem \ref{obvious} and Lemma \ref{def}. But we still have

\begin{Proposition}
\label{standard}
$\Delta(C_n)$ with the standard labeling on $C_n$ is $t$-sortable for all $t$.
\end{Proposition}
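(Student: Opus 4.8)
The plan is to reduce the cyclic situation to the path situation and then dispose of the single extra edge $\{1,n\}$ by a direct position-counting argument. The key observation is that a subset $F\subseteq[n]$ is independent in $C_n$ (with the standard labeling) precisely when it is independent in the path $P_n$ on the same vertex set and in addition does not contain both $1$ and $n$; indeed the only edge of $C_n$ that is not an edge of $P_n$ is $\{1,n\}$. Thus $\Delta(C_n)$ is the subcomplex of $\Delta(P_n)$ consisting of those faces avoiding $\{1,n\}$.

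First I would record that the standard labeling on $P_n$ satisfies condition (i) of Lemma \ref{def}: every edge of $P_n$ has the form $\{i,i+1\}$, and the induced subgraph on $\{i,i+1\}$ is trivially a clique. Since $P_n$ is a proper interval graph, Theorem \ref{obvious} (and the construction in its proof from a labeling satisfying Lemma \ref{def}(i)) shows that $\Delta(P_n)$ is sortable with respect to the standard labeling. Hence, if $F,G\in\Delta(C_n)$ with $|F|=|G|=t$ and $\sort(F,G)=(F',G')$, then, since $F,G\in\Delta(P_n)$ as well, we obtain $F',G'\in\Delta(P_n)$; that is, neither $F'$ nor $G'$ contains two consecutive integers.

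It then remains to verify the cyclic constraint, namely that neither $F'$ nor $G'$ contains both $1$ and $n$. Writing $\xb^F\xb^G=x_{i_1}\cdots x_{i_{2t}}$ with $i_1\le\cdots\le i_{2t}$, let $a\in\{0,1,2\}$ be the number of sets among $F,G$ containing the smallest label $1$, and $b\in\{0,1,2\}$ the number containing the largest label $n$. Then the value $1$ occupies exactly positions $1,\dots,a$, and the value $n$ occupies exactly positions $2t-b+1,\dots,2t$. Reading off parities, $1\in F'$ forces $a\ge 1$, $n\in F'$ forces $b=2$, $1\in G'$ forces $a=2$, and $n\in G'$ forces $b\ge 1$. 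The hypothesis that $F$ and $G$ are independent in $C_n$ says exactly that neither one contains $\{1,n\}$. I would then check the two offending cases: $\{1,n\}\subseteq F'$ would require $a\ge1$ together with $b=2$ (so $n\in F$ and $n\in G$), forcing whichever of $F,G$ contains $1$ to contain all of $\{1,n\}$, a contradiction; symmetrically, $\{1,n\}\subseteq G'$ would require $a=2$ together with $b\ge1$, again placing $\{1,n\}$ inside $F$ or $G$. Combining this with the previous paragraph gives $F',G'\in\Delta(C_n)$, as desired.

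The routine but slightly delicate part is this last bookkeeping step, where one must track whether $1$ and $n$ land in odd or even positions of the sorted word; everything else follows immediately from the reduction to $P_n$. I expect no genuine obstacle here, since the independence hypothesis on $F$ and $G$ is precisely strong enough to rule out both bad configurations.
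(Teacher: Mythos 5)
Your proof is correct and takes essentially the same route as the paper: reduce to sortability of $\Delta(P_n)$ with the standard labeling (the paper invokes Corollary \ref{tree}) and then verify directly that neither sorted set can contain both $1$ and $n$, using that $1$ occupies the first positions and $n$ the last positions of the sorted word together with the independence of $F$ and $G$ in $C_n$. Your $(a,b)$-parity bookkeeping is just a slightly more systematic packaging of the paper's case analysis ($i_1>1$ or $i_{2t}<n$, versus $i_1=1$ and $i_{2t}=n$ with $1<i_2$ and $i_{2t-1}<n$).
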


\begin{proof}
For $n=3$, the assertion is trivial. Let $n\geq 4$, and let $A$ and $B$ be two $t$-independent sets of $C_n$ and $\sort(A,B)=(A',B')$. Note that $\Delta(C_n)=\{F\in \Delta(P_n):\  \{1,n\}\nsubseteq F\}$. So we have $A,B\in \Delta(P_n)$. Thus Corollary \ref{tree} implies that $A',B'\in \Delta(P_n)$.
Let $x^Ax^B=x_{i_1}x_{i_2}\cdots x_{i_{2t-1}}x_{i_{2t}}$ with $i_1\leq i_2\leq \cdots \leq i_{2t}$. If $i_1>1$ or $i_{2t}<n$, then $\{1,n\}\nsubseteq A'$ and  $\{1,n\}\nsubseteq B'$. Therefore $A',B'\in \Delta(C_n)$ and we are done. Now, let  $i_1=1$ and $i_{2t}=n$. Then we may write $x^Ax^B=x_1x_{i_2}\cdots x_{i_{2t-1}}x_n$ with $1\leq i_2\leq \cdots \leq i_{2t-1}\leq n$. Note that $1<i_2$ and $i_{2t-1}<n$, otherwise $A$ or $B$ would not be an independent set.  By definition $A'=\{1,i_3,\ldots,i_{2t-1}\}$ and $B'=\{i_2,\ldots, i_{2t-2},n\}$. Thus $n\notin A'$ and $1\notin B'$. Hence $\{1,n\}\nsubseteq A'$ and  $\{1,n\}\nsubseteq B'$. Therefore $A',B'\in \Delta(C_n)$ as desired.
\end{proof}

\section{Algebraic properties of $t$-independence ideals of proper interval graphs}
\label{sectwo}

For a graph $G$ on the vertex set $[n]$, the $t$-independence ideal of $G$, denoted by $I_t(G)$, is defined to be the ideal generated by all monomials $u=x_{i_1}x_{i_2}\cdots x_{i_t}$ for which $\{i_1,i_2,\ldots,i_t\}$ is a $(t-1) $-face of $\Delta(G)$. The $t$-independence ideal of $G$ is in fact the $t$-clique ideal of $G^c$. The class of $t$-clique ideals was introduced by Moradi \cite{M} and had been further studied in \cite{KM} and \cite{MRFS}.
In this section we consider the $t$-independence ideal of proper interval graphs and show that they have some nice algebraic properties.

The following result,  which we quote from \cite{EH},  will be of crucial importance in what follows.
Let $I$ be a sortable monomial ideal,  $A=K[u\: u\in \mathcal{G}(I)]$ and $T=K[y_u\: u\in \mathcal{G}(I)]$ be the polynomial ring over the field $K$ in the variables $y_u$ with $u\in \mathcal{G}(I)$.
We let $L$ be the kernel of the $K$-algebra homomorphism from $T$ to $A$ with $y_u\mapsto u$ for $u\in \mathcal{G}(I)$.

Notice that if $(u,v)$ is an unsorted pair and $(u',v')=\sort(u,v)$, then $y_uy_v-y_{u'}y_{v'}\in L$ and $y_uy_v-y_{u'}y_{v'}\neq 0$, unless $(u',v')=(v,u)$. Relations of this form are called {\em sorting relations}.

\begin{Theorem}
\label{crucial}
There exists a monomial order $<'$ on $T$, called sorting order, such that for each non-zero sorting relation  $y_uy_v-y_{u'}y_{v'}$ the monomial $y_uy_v$ for the unsorted pair $(u,v)$ is the leading term. Moreover, the set of  sorting relations forms a Gr\"obner basis of $L$.
\end{Theorem}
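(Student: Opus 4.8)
The plan is to establish this as an instance of the general theory of sorting due to Sturmfels and developed in Ebey--Herzog (which is presumably the reference \cite{EH}), specializing to the squarefree setting of $t$-independence ideals. The core construction is the \emph{sorting monomial order} $<'$ on the polynomial ring $T = K[y_u \: u \in \mathcal{G}(I)]$. I would first define a total order on the generators themselves. Since each $u \in \mathcal{G}(I)$ corresponds to a face, identify $u$ with its sorted exponent word $(i_1, \dots, i_t)$ with $i_1 \le \cdots \le i_t$, and order the generators lexicographically by these words. The key structural fact, which I would recall from the Ebey--Herzog framework, is that a pair $(u,v)$ of degree-$d$ monomials is sorted precisely when interleaving their sorted words reproduces $u$ in the odd slots and $v$ in the even slots; equivalently, sorting is an idempotent operation and every $\mathcal{G}(I)\times\mathcal{G}(I)$-pair has a unique sorted representative in its $S_2$-orbit.

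The main technical step is to exhibit $<'$ so that for every nonzero sorting relation $y_u y_v - y_{u'} y_{v'}$ with $(u',v') = \sort(u,v)$, the unsorted monomial $y_u y_v$ is the leading term. I would construct $<'$ by first comparing the images under the $T \to A$ map: since $uv = u'v'$ as monomials in $A$, both $y_u y_v$ and $y_{u'}y_{v'}$ sit in the same fiber, so $<'$ must be refined below this common image. The natural choice is to order pairs by declaring that the \emph{sorted} pair is smaller; concretely, among all factorizations of a given degree-$2d$ monomial $w$ in $A$ into two generators of $I$, the sorted factorization is the lexicographically least when each factor is read as a sorted word and the two factors are listed in sorted-pair order. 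One must verify that this rule extends to a genuine monomial order on $T$ (multiplicativity and well-foundedness); the standard device is to realize $<'$ as a weight order or as a lexicographic refinement compatible with the $A$-grading, and I expect this verification to be the main obstacle, since ad hoc comparison rules on pairs need not automatically satisfy $m_1 <' m_2 \Rightarrow m m_1 <' m m_2$.

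Once $<'$ is in hand with the stated leading-term property, the Gröbner basis claim follows by Buchberger's criterion. I would show that all S-polynomials of sorting relations reduce to zero modulo the set of sorting relations. The crucial combinatorial input here is the \emph{sorting of three monomials}: given $u, v, w \in \mathcal{G}(I)$, the results of iteratively sorting pairs converge to a canonical fully-sorted triple independent of the order of operations. This confluence is exactly what makes the overlapping S-polynomials cancel, and it rests on the associativity-type property of the sorting operator established in the Ebey--Herzog / Sturmfels analysis of sorted sets. Since the hypothesis that $I$ is sortable guarantees $\sort$ maps $\mathcal{G}(I) \times \mathcal{G}(I)$ into itself, every intermediate pair produced during these reductions lies again in $\mathcal{G}(I) \times \mathcal{G}(I)$, so the reductions stay within the span of the sorting relations and terminate at a common normal form. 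I would conclude that the sorting relations are closed under taking S-polynomials up to reduction, whence they form a Gröbner basis of $L$ with respect to $<'$.
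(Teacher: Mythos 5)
You should first be aware that the paper itself gives no proof of Theorem \ref{crucial}: it is quoted verbatim from \cite{EH} (Ene--Herzog, not ``Ebey--Herzog''), where it appears as the theorem on sortable sets going back to Sturmfels' book on Gr\"obner bases and convex polytopes. So the comparison must be with that canonical proof, and against it your proposal has a genuine gap exactly at the point you yourself flag as ``the main obstacle'': the existence of the monomial order $<'$. Your plan is to order each fiber of $T\to A$ by declaring the sorted factorization smallest, and then hope to realize this as a weight order or a lexicographic refinement. This is not a side verification but the heart of the theorem, and the suggested devices do not work in general: the sorting order is not given by any of the standard explicit orders, and a comparison rule defined fiberwise need not extend multiplicatively (multiplying by a variable can move you between fibers and create cycles). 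The actual argument marks each sorting relation $y_uy_v-y_{u'}y_{v'}$ with $y_uy_v$ as the designated leading term, proves that the associated one-step reduction (replace an unsorted adjacent pair by its sorted pair) is noetherian via an explicit decreasing potential on tuples, and then invokes Sturmfels' general theorem that a marked set of binomials with noetherian reduction admits a compatible term order. Without that termination argument and that existence theorem, your order $<'$ is not constructed.

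The second half of your plan, Buchberger's criterion driven by a ``confluence of triple sorting,'' also rests on an asserted rather than proved ingredient. The associativity-type statement you invoke --- that iterated pairwise sorting of a tuple converges to a canonical fully sorted tuple independent of the order of operations --- is essentially equivalent to what must be shown, and the standard proof establishes it not by S-polynomial computations but by a uniqueness-plus-counting argument: a tuple $(u_1,\dots,u_N)$ is fully sorted if and only if $u_i$ consists of the letters in positions $\equiv i \pmod N$ of the sorted word of the product $u_1\cdots u_N$, so every monomial of $A$ of degree $Nd$ has exactly one fully sorted factorization. Hence the candidate standard monomials map bijectively onto a $K$-basis of $A$, which forces the sorting relations to generate the initial ideal of $L$; confluence then follows from termination and uniqueness of normal forms (Newman's lemma) rather than feeding into Buchberger. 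Your observation that sortability of $\mathcal{G}(I)$ keeps all intermediate pairs inside $\mathcal{G}(I)\times\mathcal{G}(I)$ is correct and necessary, but by itself it neither produces the order nor the Gr\"obner basis property. To repair the proposal, replace the ``weight order / lex refinement'' hope with the marked-binomial existence theorem plus a termination proof for the sorting algorithm, and replace the S-polynomial confluence claim with the uniqueness of the fully sorted factorization.
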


\begin{Corollary}
\label{sortable*}
Let $\Delta$ be a $t$-sortable simplicial complex. Then  the toric ring $K[x^F\: F\in \Delta , |F|=t]$ is Koszul and a normal Cohen--Macaulay domain. In particular, when $G$ is a proper interval graph or an $n$-cycle, then $K[u\: u\in \mathcal{G}(I_t(G))]$ is Koszul and a normal Cohen--Macaulay domain.
\end{Corollary}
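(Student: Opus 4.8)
The plan is to reduce everything to Theorem~\ref{crucial} and then assemble three standard facts about toric rings: a quadratic Gr\"obner basis forces Koszulness, a squarefree initial ideal forces normality, and a normal affine semigroup ring is Cohen--Macaulay.

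First I would fix a labeling of $V(\Delta)$ witnessing the $t$-sortability and set $\Mc=\{\xb^F\: F\in\Delta,\ |F|=t\}\subset S_t$. By the definition of $t$-sortability, $\sort(u,v)\in\Mc\times\Mc$ for all $u,v\in\Mc$, so $\Mc$ is a sortable set of monomials of the single degree $t$. Since the elements of $\Mc$ are pairwise distinct squarefree monomials of the same degree, none divides another, whence $\Mc=\mathcal{G}(I)$ for $I=(\xb^F\: F\in\Delta,\ |F|=t)$; thus $I$ is a sortable ideal and the ring in question is $A=K[u\: u\in\mathcal{G}(I)]$.

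Next I would invoke Theorem~\ref{crucial} for this $I$: with $T=K[y_u\: u\in\mathcal{G}(I)]$ and $L=\ker(T\to A)$, the sorting relations $y_uy_v-y_{u'}y_{v'}$ form a Gr\"obner basis of $L$ with respect to the sorting order $<'$, and for each nonzero such relation the unsorted monomial $y_uy_v$ is the leading term. These relations are quadratic binomials, so $L$ has a quadratic Gr\"obner basis, and a standard theorem in the theory of Koszul algebras (a standard graded algebra whose defining ideal admits a quadratic Gr\"obner basis is Koszul) yields that $A$ is Koszul. For normality I would observe that any pair $(u,v)$ with $u=v$ is automatically sorted, so every unsorted pair has $u\neq v$; hence each leading monomial $y_uy_v$ is a product of two distinct variables and is squarefree, and therefore $\ini_{<'}(L)$ is a squarefree monomial ideal. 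By Sturmfels' criterion, a toric ideal possessing a squarefree initial ideal with respect to some term order has a normal toric ring, so $A$ is normal; being a normal affine semigroup ring, $A$ is Cohen--Macaulay by Hochster's theorem, and it is a domain as a subring of the polynomial ring $S$. Finally, for the ``in particular'' clause, Theorem~\ref{obvious} together with Remarks~\ref{restriction}(ii) shows that $\Delta(G)$ is $t$-sortable for every $t$ when $G$ is a proper interval graph, while Proposition~\ref{standard} gives the same for $G=C_n$; in both cases $\mathcal{G}(I_t(G))=\{\xb^F\: F\in\Delta(G),\ |F|=t\}$, so the general statement applies verbatim.

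The main obstacle I anticipate is not any single deep step but making the reduction airtight: one must check that $\Mc$ really is the minimal generating set of a single-degree ideal, so that Theorem~\ref{crucial} applies as stated, and, for normality, that the leading terms are genuinely squarefree, which rests on the elementary but essential remark that an unsorted pair consists of distinct monomials. Everything downstream --- Koszulness from the quadratic Gr\"obner basis, normality from the squarefree initial ideal, and Cohen--Macaulayness from normality --- is a citation of standard toric-algebra results once Theorem~\ref{crucial} is in hand.
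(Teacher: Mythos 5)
Your proposal is correct and follows essentially the same route as the paper's proof: reduce to Theorem~\ref{crucial} via sortability of $I=(\xb^F\: F\in\Delta,\ |F|=t)$, then cite the standard implications quadratic Gr\"obner basis $\Rightarrow$ Koszul, squarefree initial ideal $\Rightarrow$ normal (Sturmfels), and normal toric $\Rightarrow$ Cohen--Macaulay (Hochster), with the ``in particular'' clause from Theorem~\ref{obvious}, Remarks~\ref{restriction}(ii) and Proposition~\ref{standard}. The only difference is that you spell out two details the paper leaves implicit --- that $\Mc=\mathcal{G}(I)$ and that unsorted pairs consist of distinct monomials, so the leading terms $y_uy_v$ are genuinely squarefree --- which is a welcome tightening, not a deviation.
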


\begin{proof} Since the ideal  $I=\langle x^F:\ F\in \Delta , |F|=t\rangle$ is sortable, by Theorem~\ref{crucial},    the defining ideal $L$ of $A=K[u\: u\in \mathcal{G}(I)]$ has a quadratic Gr\"obner basis with respect to the sorting order.  It follows  that $A$ is Koszul, see for example \cite[Theorem 2.28]{HHO}. Since the initial ideal of  $L$  is squarefree, by the theorem of Sturmfels \cite{St} (see also \cite[Corollary 4.26]{HHO}), $A$ is  normal. Now we apply the result of Hochster \cite[Theorem 6.3.5]{BH} which says that any normal toric ring is Cohen--Macaulay. The second statement is obtained by applying the first part on $\Delta=\Delta(G)$, Theorem \ref{obvious}, Remarks \ref{restriction}(ii) and Proposition \ref{standard}.
\end{proof}

We now consider the Rees ring of the $t$-independence ideals of proper interval graphs. To this end we recall the concept introduced in \cite{HHV},  which is  called the  $\ell$-exchange property.

Let $I\subset S=K[x_1,\ldots,x_n]$ be a monomial ideal  generated in a single degree. Then $A=K[u\: u\in \mathcal{G}(I)]$  is isomorphic to the fiber $\mathcal{R}(I)/\mm \mathcal{R}(I)$ of the Rees ring $\mathcal{R}(I)=\Dirsum_{k\geq 0}I^kt^k$, where $\mm=\langle x_1,\ldots,x_n\rangle$ is the
graded maximal ideal of $S$.  Then $\mathcal{R}(I)\iso R/J$,  where $R=S[y_u\: u\in \mathcal{G}(I)]$ and  $J$ is the kernel of the $K$-algebra homomorphism $R\to \mathcal{R}(I)$ with $x_i\mapsto x_i$ for $i=1,\ldots,n$ and $y_u\mapsto ut$ for any $u\in \mathcal{G}(I)$.

Let $A$, $T$ and $L$ be defined as before Theorem~\ref{crucial}.  We fix a monomial order $<'$ on $T$. A monomial $w \in T$  is called a {\em standard monomial} of $L$ with respect to $<'$, if  $w\not \in \ini_{<'}(L)$.

For example, if $I$ is sortable and we let $<'$ be the sorting order on $T$, then $w=y_{u_1}\cdots y_{u_N}$  is a standard monomial of $L$ with respect to $<'$ if and only if  $(u_{i}, u_{j})$ is sorted for all $i<j$.

\begin{Definition}
\label{l}
{\em Let $I$ be a monomial ideal. Then  $I$ is said to satisfy the {\em$\ell$-exchange
property} with respect to the monomial order $<$ on $T$,  if the following condition is satisfied: let  $y_{u_1}\cdots y_{u_N}$ and $y_{v_1}\cdots y_{v_N}$ be any two standard monomials of $L$ with respect to $<$  such that
\begin{enumerate}
\item[(i)]
$\deg_{x_r} (u_1\cdots u_N)=\deg_{x_r} (v_1\cdots v_N)$ for $r=1,\ldots,q-1$ with $q\leq n-1$,
\item [ (ii)]
$\deg_{x_q}(u_1\cdots u_N)<\deg_{x_q}(v_1\cdots v_N)$.
\end{enumerate}
Then there exists an integer $k$,  and an integer $q<j\leq n$ with $x_j\in\supp(u_k)$ such that  $x_qu_k/x_j\in I$.}
\end{Definition}

\begin{Proposition}
\label{lcondition}
Let $G$ be a proper interval graph on the vertex set $[n]$. Then for all $t\geq 2$,  the ideal $I_t(G)$ satisfies the $\ell$--exchange
property with respect to the sorting order.
\end{Proposition}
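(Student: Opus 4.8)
The plan is to fix the labeling of $V(G)=[n]$ given by Lemma \ref{def}(i); under it $\Delta(G)$ is sortable by Theorem \ref{obvious}, hence $t$-sortable by Remarks \ref{restriction}(ii), so $I_t(G)$ is a sortable ideal and, by the description recalled before Definition \ref{l}, a product $y_{u_1}\cdots y_{u_N}$ is a standard monomial for the sorting order precisely when $(u_i,u_j)$ is sorted for all $i<j$. I will rely on two facts about this labeling. The first is a \emph{consecutive criterion}: a set $F=\{c_1<\cdots<c_t\}$ lies in $\Delta(G)$ if and only if $\{c_\ell,c_{\ell+1}\}\notin E(G)$ for every $\ell$, since any non-consecutive edge $\{c_\ell,c_{\ell'}\}$ with $\ell'>\ell+1$ would force $[c_\ell,c_{\ell'}]$ to be a clique by Lemma \ref{def}(i), making $\{c_\ell,c_{\ell+1}\}$ an edge already. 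The second is the standard \emph{de-interleaving} description of sorted tuples (see \cite{EH}): writing $u_1\cdots u_N=x_{a_1}\cdots x_{a_{Nt}}$ with $a_1\le\cdots\le a_{Nt}$, the supports of the $u_i$ are, as a multiset, the faces $U_k=\{a_{k+rN}\:\ 0\le r\le t-1\}$, $k=1,\dots,N$; the same applied to $v_1\cdots v_N=x_{b_1}\cdots x_{b_{Nt}}$ produces faces $V_k=\{b_{k+rN}\:\ 0\le r\le t-1\}$.

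Given the two standard monomials satisfying (i) and (ii), I will first read off the combinatorics of the sorted exponent sequences $(a_i)$ and $(b_i)$. Condition (i) forces $a_i=b_i$ at every position $i$ with $a_i<q$; let $m$ be the number of such positions and set $p=\deg_{x_q}(u_1\cdots u_N)$, $p'=\deg_{x_q}(v_1\cdots v_N)$, so that $a_{m+1}=\cdots=a_{m+p}=q<a_{m+p+1}$ and $b_{m+1}=\cdots=b_{m+p'}=q$, with $p<p'$ by (ii). Since each side is a product of $N$ squarefree generators, $p'\le N$, whence $p\le N-1$. Putting $\iota=m+p+1$, I then have $a_\iota>q$ while $b_\iota=q$ (as $p+1\le p'$). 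Let $k_0\in\{1,\dots,N\}$ be the residue of $\iota$ modulo $N$, so $a_\iota$ is the $\ell_0$-th smallest element of the face $U_{k_0}$; the candidate for the exchange is $k=k_0$ and $j=a_\iota$, for which $q<j\le n$.

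Next I would check that the substitution is legitimate and lands in $I_t(G)$. Because $p\le N-1$ gives $\iota-N\le m$, every element of $U_{k_0}$ below $a_\iota$ has value $<q$ and every element above has value $>q$, so $q\notin U_{k_0}$ and $x_qu_{k_0}/x_j$ is squarefree, equal to $\xb^{F'}$ with $F'=(U_{k_0}\setminus\{a_\iota\})\cup\{q\}=\{c_1<\cdots<c_{\ell_0-1}<q<c_{\ell_0+1}<\cdots<c_t\}$, where $c_{\ell_0\pm1}=a_{\iota\pm N}$. By the consecutive criterion it suffices to prove the two non-edges $\{q,c_{\ell_0+1}\}\notin E(G)$ and $\{c_{\ell_0-1},q\}\notin E(G)$ (all other consecutive pairs being inherited from $U_{k_0}\in\Delta(G)$, and the relevant check being vacuous when $\ell_0=t$ or $\ell_0=1$). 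The first is easy: an edge $\{q,c_{\ell_0+1}\}$ would make $[q,c_{\ell_0+1}]$ a clique, forcing $\{a_\iota,c_{\ell_0+1}\}\in E(G)$ and contradicting $U_{k_0}\in\Delta(G)$.

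The crux, and the step I expect to be the main obstacle, is the left-hand non-edge $\{c_{\ell_0-1},q\}\notin E(G)$: here $c_{\ell_0-1}<q$, so the clique argument does not apply directly, and independence of $F'$ could genuinely fail on the left of $q$ if one argues only from $U_{k_0}$. The resolution is to invoke the second monomial. Since $\iota-N\le m$, condition (i) gives $b_{\iota-N}=a_{\iota-N}=c_{\ell_0-1}$, while $b_\iota=q$; thus $c_{\ell_0-1}$ and $q$ are the elements at the consecutive levels $\ell_0-1$ and $\ell_0$ of the face $V_{k_0}\in\Delta(G)$, and the consecutive criterion applied to $V_{k_0}$ yields $\{c_{\ell_0-1},q\}\notin E(G)$. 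Hence $F'\in\Delta(G)$ with $|F'|=t$, i.e.\ $x_qu_{k_0}/x_j=\xb^{F'}\in I_t(G)$, which is exactly the $\ell$-exchange property. In short, it is precisely the extra copies of $x_q$ carried by $v_1\cdots v_N$ — recorded in the face $V_{k_0}$ — that certify $q$ is non-adjacent to its left neighbour after the exchange.
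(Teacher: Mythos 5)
Your proposal is correct and follows essentially the same route as the paper: both rest on the interleaving description of sorted tuples from \cite[Relation (6.3)]{EH} to match all entries below $q$ positionwise, locate a factor $u_{k_0}$ missing $x_q$ whose partner $v_{k_0}$ contains it, and verify independence of the exchanged set by the same two arguments — non-adjacency of $q$ to the smaller indices certified by the independence of $\supp(v_{k_0})$, and non-adjacency to the larger indices via the clique--interval property of $N_{G^q}[q]$ from Lemma \ref{def}. Your reduction to checking only consecutive pairs is a minor streamlining of the paper's case split $r<i$ versus $r>i$, not a different method.
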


\begin{proof} Let $y_{u_1}\cdots y_{u_N}$ and $y_{v_1}\cdots y_{v_N}$  be standard  monomials satisfying (i) and (ii) of Definition~\ref{l}. Then $(u_i,u_j)$ and $(v_i,v_j)$ are sorted for any $i<j$.
If $u_j=x_{i_{j,1}}\cdots x_{i_{j,t}}$ and $v_j=x_{i'_{j,1}}\cdots x_{i'_{j,t}}$ for any $1\leq j\leq N$, then by \cite[Relation (6.3)]{EH},
\[
i_{1,1}\leq i_{2,1}\leq \cdots \leq\ i_{N,1}\leq i_{1,2}\leq i_{2,2}\leq \cdots\leq i_{N,2} \leq \cdots\leq i_{1,t} \leq i_{2,t} \leq \cdots \leq i_{N,t}
\]
and
\[
i'_{1,1}\leq i'_{2,1}\leq \cdots \leq\ i'_{N,1}\leq i'_{1,2}\leq i'_{2,2}\leq \cdots\leq i'_{N,2} \leq \cdots\leq i'_{1,t} \leq i'_{2,t} \leq \cdots \leq i'_{N,t}.
\]
Since $\deg_{x_r} (u_1\cdots u_N)=\deg_{x_r} (v_1\cdots v_N)$ for $r=1,\ldots,q-1$ with $q\leq n-1$, it follows from the above sequences of inequalities that for any $i_{j,k}\leq q-1$, $i_{j,k}=i'_{j,k}$. Hence
$\deg_{x_r}(u_j)=\deg_{x_r}(v_j)$ for  all $j$ and $1\leq r\leq q-1$. Condition (ii) of Definition \ref{l} implies that there exists $m$ such that $\deg_{x_q}(u_m)<\deg_{x_q}(v_m)$.

Let $u_m=x_{k_1}x_{k_2}\ldots x_{k_t}$, $v_m=x_{l_1}x_{l_2}\ldots x_{l_t}$ such that $k_1<\cdots< k_t$ and $l_1<\cdots< l_t$ and $q=l_i$ for some $1\leq i< t$. Then  $k_1=l_1, \ldots, k_{i-1}=l_{i-1}$ and $k_i>l_i=q$. Set $j=k_i$. We show that $x_qu_m/x_j\in I_t(G)$. By contradiction suppose that $(\supp(u_m)\setminus \{x_j\})\cup\{x_q\}$ is not an independent set of $G$. Then $\{q,k_r\}\in E(G)$ for some $1\leq r\leq t$, $r\neq i$. Since $k_1=l_1, \ldots, k_{i-1}=l_{i-1}$ and $\{x_q,x_{l_1},\ldots,x_{l_{i-1}}\}\subseteq \supp(v_m)$, we have $r>i$. Hence $k_r\in N_{G^q}[q]$, where $G^q=G[q,q+1,\ldots,n]$. Observe that $q<k_i<k_r$ and $N_{G^q}[q]$ is an interval and a clique of $G$. This implies that $k_i\in N_{G^q}[q]$ and $\{k_i,k_r\}\in E(G)$. Since $\{x_{k_i},x_{k_r}\}\subseteq \supp(u_m)$, it follows that $\supp(u_m)$
does not correspond to an independent set of $G$, a contradiction.
 \end{proof}

According to \cite[Theorem 5.1]{HHV}  (see also \cite[Theorem 6.24]{EH}), the Rees ring of a monomial ideal satisfying the $\ell$-exchange property has a particularly nice presentation. To describe this result,  let
 $<_{\lex}$ be the lexicographic order on $S$ with respect to $x_1>\cdots>x_n$. A new monomial order $<_{\lex}'$ on $R$ is defined as follows:
for two monomials  $u_1, u_2\in S$ and  two monomials $v_1,v_2\in T$, we set
$u_1v_1<_{\lex}'u_2v_2$ if and only if
(i) $u_1<_{\lex} u_2$ or
(ii)  $u_1=u_2$ and $v_1<'v_2$.

\begin{Theorem}
\label{generalgb}
Let $I$ be a monomial ideal generated in one degree, satisfying the $\ell$-exchange property. Then the reduced Gr\"obner basis of the toric ideal $J$ with respect to
$<_{\lex}'$ consists of all binomials belonging to the reduced Gr\"obner basis of
$L$  with respect to $<'$ together with  the binomials
\[
x_{i}y_{u}-x_{j}y_{v},
\]
where $x_{i}>x_{j}$  with $x_{i}u=x_{j}v$ and $j$ is the smallest integer   for which $x_{i}u/x_{j}$ belongs to $I$.
\end{Theorem}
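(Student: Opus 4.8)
The plan is to recognize this as an application of the general machinery from \cite[Theorem 5.1]{HHV} (equivalently \cite[Theorem 6.24]{EH}), which asserts precisely the claimed form of the reduced Gröbner basis for the toric ideal $J$ of the Rees ring of any monomial ideal $I$ satisfying the $\ell$-exchange property with respect to a monomial order $<'$ on $T$. Thus the real content is simply to verify that all hypotheses of that cited theorem are met in our situation and then to quote it. Concretely, $I$ is generated in a single degree by assumption, and by Proposition \ref{lcondition} it satisfies the $\ell$-exchange property with respect to the sorting order $<'$; moreover the fiber-cone defining ideal $L$ has its reduced Gröbner basis (with respect to $<'$) given by the sorting relations, by Theorem \ref{crucial}. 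These are exactly the inputs the cited theorem consumes.

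The first step I would carry out is to describe the presentation $\mathcal{R}(I)\iso R/J$ with $R=S[y_u\:u\in\mathcal{G}(I)]$, already recalled before Definition \ref{l}, and to fix the block order $<_{\lex}'$ defined just above the theorem, which refines the lexicographic order on the $S$-variables by the order $<'$ on the $T$-variables. The second step is to split the candidate generating set of $J$ into two families: the lifted relations coming from $L$ (the sorting relations $y_uy_v-y_{u'}y_{v'}$, which involve no $x_i$ and hence lie in $J$ unchanged) and the new binomials $x_iy_u-x_jy_v$ with $x_iu=x_jv$, $x_i>x_j$, and $j$ minimal so that $x_iu/x_j\in I$. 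One verifies directly that each such binomial lies in $J$: under the map $x_i\mapsto x_i$, $y_u\mapsto ut$ one gets $x_i\cdot ut=x_iut=x_jvt=x_j\cdot vt$, so the binomial maps to zero.

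The heart of the argument, which is exactly what \cite[Theorem 5.1]{HHV} supplies, is to show that this set is already a Gröbner basis and is reduced. One shows that the leading terms are $x_iy_u$ for the new binomials (since $x_i>x_j$ in $<_{\lex}$ forces $x_iy_u>_{\lex}'x_jy_v$) together with the leading terms of the sorting relations, and that every element of $\ini_{<_{\lex}'}(J)$ is divisible by one of these. The key point where the $\ell$-exchange property enters is in carrying out Buchberger's criterion: when one reduces the $S$-pairs between a new binomial $x_iy_{u}$ and a sorting relation (or between two new binomials), the $\ell$-exchange property guarantees that the resulting standard monomial can again be rewritten using an available $x_qu_k/x_j\in I$ exchange, so that every $S$-polynomial reduces to zero. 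I expect the main obstacle to be nothing more than faithfully matching our notation (the sorting order $<'$, the indices $q$ and $j$ appearing in Definition \ref{l}) to the hypotheses of the cited theorem and confirming that the minimality condition on $j$ in the statement is exactly the one producing a reduced basis; once that bookkeeping is done, the conclusion follows verbatim from \cite[Theorem 5.1]{HHV} and \cite[Theorem 6.24]{EH}.
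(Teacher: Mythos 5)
Your proposal is correct and coincides with the paper's treatment: the paper gives no independent proof of Theorem~\ref{generalgb}, stating it as a direct quotation of \cite[Theorem 5.1]{HHV} (see also \cite[Theorem 6.24]{EH}), which is exactly the result you invoke. Note only that the theorem's hypotheses already assume the $\ell$-exchange property for an arbitrary monomial ideal generated in one degree, so Proposition~\ref{lcondition} and Theorem~\ref{crucial} are not prerequisites to be verified for this statement but are used later, when the theorem is applied to $I_t(G)$.
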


Let $I\subset S=K[x_1,\ldots,x_n]$ a graded ideal and $P$ be a prime ideal with $I\subseteq P$.  Recall that   $I$ satisfies the {\em strong persistence property with respect to $P$} if for all $k$ and all  $f\in ((IS_P) ^k:PS_P )\setminus (IS_P)^k$  there exists $g\in IS_P$  such that  $fg\not \in (IS_P)^{k+1}$. The ideal $I$ is said to satisfy the {\em strong persistence property} if it satisfies the strong persistence property with respect to $P$ for any prime ideal $P$ containing $I$.   Note that strong persistence implies persistence, which means that $\Ass(I^k)\subseteq \Ass(I^{k+1})$ for all $k$.

It is shown in \cite[Theorem 1.3]{HQ} that $I$ satisfies strong persistence if and only  if $I^{k+1}:I =I^k$ for all $k$. Under the assumption that $K$ is infinite $I$ satisfies strong persistence if  $\mathcal{R}(I)$ is normal or Cohen-Macaulay, see  \cite[Corollary  1.6]{HQ}.

\medskip
As  a result of  Proposition~\ref{lcondition} and Theorem~\ref{generalgb} we obtain

\begin{Corollary}
\label{finally}
Let $G$ be a proper interval graph. Then for  all $t\geq 2$,  the independence ideal $I_t(G)$ satisfies the strong persistence property  and all of its  powers have linear resolutions.
\end{Corollary}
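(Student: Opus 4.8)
The plan is to read off everything from the $\ell$-exchange property established in Proposition~\ref{lcondition}, so that Corollary~\ref{finally} reduces to an inspection of the Gr\"obner basis of the Rees ideal of $I_t(G)$. Fix $t\geq 2$ and write $I=I_t(G)$. I may assume $K$ is infinite, since strong persistence is equivalent to the field-independent condition $I^{k+1}:I=I^k$ for all $k$ by \cite[Theorem 1.3]{HQ}, and a field extension affects none of the monomial data.

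\emph{Strong persistence.} Since $G$ is a proper interval graph, Proposition~\ref{lcondition} gives the $\ell$-exchange property for $I$ with respect to the sorting order, so Theorem~\ref{generalgb} describes the reduced Gr\"obner basis of the toric ideal $J$ defining $\mathcal{R}(I)$ with respect to $<_{\lex}'$. I would check that every leading term occurring there is squarefree. The relations coming from $L$ are the sorting relations $y_uy_v-y_{u'}y_{v'}$, which arise from \emph{unsorted} pairs $(u,v)$; such a pair has $u\neq v$, so the leading term $y_uy_v$ is a product of two distinct variables and hence squarefree. The remaining generators are the binomials $x_iy_u-x_jy_v$ with $x_i>x_j$, whose leading term $x_iy_u$ is again squarefree. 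Thus $\ini_{<_{\lex}'}(J)$ is a squarefree monomial ideal, so by Sturmfels' theorem \cite{St} (exactly as invoked in Corollary~\ref{sortable*}) $\mathcal{R}(I)$ is normal, and by Hochster's theorem \cite[Theorem 6.3.5]{BH} it is Cohen--Macaulay. Now \cite[Corollary 1.6]{HQ} yields the strong persistence property; in particular $\Ass(I^k)\subseteq\Ass(I^{k+1})$ for all $k$.

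\emph{Powers.} Here I would use the same Gr\"obner basis through its shape rather than its squarefreeness. The generators of $J$ outside $L$ are precisely the binomials $x_iy_u-x_jy_v$, which have $x$-degree $1$, while the sorting relations have $x$-degree $0$; hence the reduced Gr\"obner basis of $J$ with respect to $<_{\lex}'$ consists of elements of degree at most one in $x_1,\ldots,x_n$, i.e. $I$ satisfies the $x$-condition. For an ideal generated in a single degree this forces every power $I^k$ to have linear quotients with respect to the order on $\mathcal{G}(I^k)$ induced by $<_{\lex}$, the variable generators of the successive colon ideals being supplied by the linear binomials $x_iy_u-x_jy_v$ (see \cite{HHV} and the treatment of the $x$-condition in \cite{HHO}). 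Since linear quotients for an ideal generated in one degree imply a linear resolution, all powers of $I_t(G)$ have linear resolutions, completing the proof.

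I expect no serious obstacle at this stage: the essential difficulty has already been discharged in Proposition~\ref{lcondition}, where the proper-interval hypothesis enters through the fact that each $N_{G^q}[q]$ is simultaneously a clique and an interval. The one point that genuinely must be checked here is that none of the leading terms in Theorem~\ref{generalgb} is a proper power, since this is exactly what makes $\ini_{<_{\lex}'}(J)$ squarefree and thereby drives the normality, Cohen--Macaulayness and strong persistence; it is immediate from the explicit two-family description of the Gr\"obner basis.
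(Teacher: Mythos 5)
Your proof is correct and follows essentially the same route as the paper: Proposition~\ref{lcondition} together with Theorem~\ref{generalgb} yields the reduced Gr\"obner basis of the Rees ideal, whose squarefree leading terms give normality (Sturmfels), Cohen--Macaulayness (Hochster) and hence strong persistence via \cite[Corollary 1.6]{HQ}, while the degree~$\leq 1$ condition in the $x$-variables (the $x$-condition) gives linear quotients and hence linear resolutions for all powers, which is exactly what the paper does via \cite[Corollary 10.1.7]{HH}. Your explicit verification that unsorted pairs satisfy $u\neq v$ (so the leading terms really are squarefree) and your reduction of the infinite-field hypothesis of \cite[Corollary 1.6]{HQ} through the field-independent criterion $I^{k+1}:I=I^k$ of \cite[Theorem 1.3]{HQ} are careful touches that the paper leaves implicit.
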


\begin{proof}
It follows from Proposition~\ref{lcondition} and Theorem~\ref{generalgb} that
all powers of $I_t(G)$ have linear resolution,
see \cite[Corollary 10.1.7]{HH}. By the results of Sturmfels  and Hochster, mentioned already in the proof of Corollary~\ref{sortable*}, we see that $\mathcal{R}(I_t(G))$  is a normal Cohen-Macaulay ring. By \cite[Corollary  1.6]{HQ}, this implies strong persistence.
\end{proof}

The following theorem which is a generalization of Theorem 2.4 in \cite{KM}, proves conjecture 2.3 in \cite{KM} for proper interval graphs.
\begin{Theorem}
Let $G$ be a proper interval graph on the vertex set $[n]$ and $I=I_t(G)$ for some $t\geq 2$. Then for any positive integer $m$, $I^m$ has linear quotients.
\end{Theorem}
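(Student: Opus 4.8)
The plan is to produce an explicit linear order on $\mathcal{G}(I^m)$ for which $I^m$ has linear quotients, and to verify the required colon condition by a single variable exchange furnished by the $\ell$-exchange property of Proposition~\ref{lcondition}. Since $G$ is a proper interval graph, Theorem~\ref{obvious} shows that $I=I_t(G)$ is sortable, so by Theorem~\ref{crucial} the standard monomials of the defining ideal $L$ of $A=K[u\: u\in\mathcal{G}(I)]$ are exactly the products $y_{u_1}\cdots y_{u_m}$ coming from sorted tuples. Consequently every generator $w\in\mathcal{G}(I^m)$ has a unique sorted decomposition $w=u_1\cdots u_m$ with each $u_i\in\mathcal{G}(I)$, whose exponents arrange into the staircase displayed in the proof of Proposition~\ref{lcondition}; in particular a degree $tm$ monomial lies in $\mathcal{G}(I^m)$ if and only if its sorted decomposition consists of generators of $I$.

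Next I would order $\mathcal{G}(I^m)$ in decreasing lexicographic order $<_{\lex}$ (with $x_1>\cdots>x_n$). Fix a generator $Q=u_1\cdots u_m$ and any generator $P$ with $P>_{\lex}Q$, and let $x_q$ be the first variable at which the exponent vectors of $P$ and $Q$ differ; then $\deg_{x_q}(P)>\deg_{x_q}(Q)$, and since $\deg P=\deg Q=tm$ forces a later variable where $Q$ dominates, we have $q\leq n-1$. I then apply the $\ell$-exchange property (Definition~\ref{l}, established in Proposition~\ref{lcondition}) with $Q$ playing the role of the monomial of smaller $x_q$-degree and $P$ that of larger $x_q$-degree: this yields an index $k$ and an integer $q<j\leq n$ with $x_j\in\supp(u_k)$ such that $x_qu_k/x_j\in I$. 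Replacing $u_k$ by $x_qu_k/x_j$ in the decomposition of $Q$ produces a product of $m$ generators of $I$, hence a monomial $R=Qx_q/x_j\in I^m$; as $\deg R=tm$ and $I^m$ is generated in degree $tm$, the monomial $R$ is in fact a minimal generator, $R\in\mathcal{G}(I^m)$.

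Because $j>q$, the monomial $R$ agrees with $Q$ before $x_q$ and has strictly larger $x_q$-exponent, so $R>_{\lex}Q$; moreover $\gcd(R,Q)=Q/x_j$, whence $R:Q=x_q$. Thus $x_q$ belongs to the colon ideal $\langle w'\: w'>_{\lex}Q\rangle:Q$, while at the same time $x_q$ divides $P:Q$ since $\deg_{x_q}(P)>\deg_{x_q}(Q)$. As $P$ was an arbitrary generator preceding $Q$, every generator $P:Q$ of this colon ideal is divisible by a variable that itself lies in the colon ideal, so the colon ideal is generated by variables; this is exactly the linear quotients condition for the chosen order. The substantive combinatorial input is entirely contained in Proposition~\ref{lcondition}, so I expect the main obstacle to be bookkeeping rather than a genuine new difficulty: correctly matching the exchanged factor $x_qu_k/x_j$ to the colon condition (aligning the roles in Definition~\ref{l}), and confirming that the modified monomial $R$ is again a \emph{minimal} generator of $I^m$, which is where the single-degree hypothesis and $\deg R=tm$ are used.
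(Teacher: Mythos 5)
Your proof is correct, but it takes a genuinely different route from the paper's. The paper argues from scratch: it first identifies the minimal generating set of $I^m$ explicitly as the set $\mathcal{A}$ of degree-$tm$ monomials whose sorted ``staircase'' arrangement has every column $\{i_{1,\ell},i_{2,\ell},\ldots,i_{t,\ell}\}$ independent (proved by a pigeonhole argument on a multiset of at least $m+1$ pairwise equal-or-adjacent vertices, using the clique property of Lemma~\ref{def}), and then verifies the linear-quotients exchange for the lex order directly, by replacing $i_{s,k}$ with $i'_{s,k}$ at the first differing staircase position and checking that the modified column remains independent, again via the proper-interval clique-on-intervals property. You keep the same skeleton (decreasing lex order, a single-variable exchange at the first variable $x_q$ where $P$ dominates $Q$) but outsource both substantive combinatorial steps to machinery already in the paper: Theorem~\ref{crucial} gives every generator of $I^m$ a sorted (standard-monomial) decomposition, and the $\ell$-exchange property of Proposition~\ref{lcondition} supplies $j>q$ with $x_qu_k/x_j\in I$, whence $R=x_qQ/x_j\in\mathcal{G}(I^m)$ (by the degree count and single-degree generation), $R>_{\lex}Q$, $R:Q=x_q$, and $x_q\mid P:Q$. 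The details check out, including $q\leq n-1$, the standardness of both decompositions needed to invoke Definition~\ref{l}, and the minimality of $R$; in effect you prove the general principle that a sortable equigenerated monomial ideal satisfying the $\ell$-exchange property has all powers with linear quotients, and then specialize. What the paper's self-contained route buys is the explicit minimal generating set $\mathcal{A}$ of $I^m$ (of independent interest) and independence from the Gr\"obner-basis formalism; what yours buys is brevity and portability, since it applies verbatim to any ideal for which an analogue of Proposition~\ref{lcondition} is established. The one point worth making explicit --- the same one the paper leaves implicit --- is that after a harmless relabeling one assumes the ordering of $[n]$ is a proper-interval ordering as in Lemma~\ref{def}, since both the sortability of $\Delta(G)$ and Proposition~\ref{lcondition} are stated with respect to such a labeling.
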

\begin{proof}
Note that
$$I=\langle x_{i_1}\cdots x_{i_t}:\ \{i_1,\ldots, i_t\}\in S_t(G)\rangle,$$
where $S_t(G)$ is the set of all $t$-independent subsets of $G$. Firstly we establish

$$\mathcal{A} = \{x_{i_{1,1}}x_{i_{1,2}}\cdots x_{i_{1,m}}
x_{i_{2,1}} x_{i_{2,2}}\cdots x_{i_{2,m}}\cdots x_{i_{t,1}}x_{i_{t,2}}\cdots x_{i_{t,m}}:$$
$$i_{1,1}\leq \cdots \leq i_{1,m} \leq i_{2,1}\leq \cdots\leq i_{2,m}\leq\cdots\leq i_{t,1}\leq\cdots\leq i_{t,m},\ \textrm{and}\ $$ $$\{i_{1,\ell},i_{2,\ell},\ldots,i_{t,\ell}\}\in S_t(G) \ \textrm{for all} \ 1\leq \ell\leq m\ \},$$
is a minimal set of monomial generators for $I^m$.
To this aim, note that any minimal monomial generator $u$ of $I^m$ is the product of $m$ monomials corresponding to some members of $S_t(G)$ and so is of degree $tm$. Thus it can be written as
$u=x_{i_{1,1}}x_{i_{1,2}}\cdots x_{i_{1,m}}x_{i_{2,1}}x_{i_{2,2}}\cdots x_{i_{2,m}}\cdots x_{i_{t,1}}x_{i_{t,2}}\cdots x_{i_{t,m}}$, where $i_{1,1}\leq \cdots\leq i_{1,m} \leq i_{2,1}\leq \cdots\leq i_{2,m}\leq\cdots\leq i_{t,1}\leq\cdots\leq  i_{t,m}$. Assume, in contrary, that there exists an index $1\leq \ell\leq m$ such that $\{i_{1,\ell},i_{2,\ell},\ldots,i_{t,\ell}\}\notin S_t(G)$. Then there exist $1\leq j<j' \leq t$ such that either $i_{j,\ell}=i_{j',\ell}$ or $\{i_{j,\ell}, i_{j',\ell}\}\in E(G)$. Consider the multiset 
$C=\{i_{j,\ell},i_{j,\ell+1},\ldots, i_{j,m},\ldots, i_{j',1}, i_{j',2},\ldots, i_{j',\ell}\}$. Since $G$ is proper interval, any two vertices in $C$ are either equal or adjacent in $G$.
Let $u_1,\ldots,u_m\in \mathcal{G}(I)$ such that $u=\prod_{i=1}^m u_i$. Since $\xb^C|\prod_{i=1}^m u_i$ and $\deg(\xb^C)\geq m+1$, there exists some $1\leq i\leq m$ and $p,q\in C$ such that $x_px_q|u_i$. Note that $u_i$ is squarefree and hence $p$ and $q$ are distinct. Since $\supp(u_i)$ corresponds to an independent set of $G$, we should have $\{p,q\}\notin E(G)$. This is a contradiction.

Conversely, it is obvious that any member of $\mathcal{A}$ is the product of $m$ monomials corresponding to members of $S_t(G)$ and so belongs to $I^m$.

Now consider the lex order induced by $x_1>x_2>\cdots>x_n$ on the minimal monomial generators of $I^m$.
Let $u,u'\in \mathcal{A}$ with $u'>_{lex}u$. Let
$$u=x_{i_{1,1}}x_{i_{1,2}}\cdots x_{i_{1,m}}
x_{i_{2,1}}x_{i_{2,2}}\cdots x_{i_{2,m}}\cdots x_{i_{t,1}}x_{i_{t,2}}\cdots x_{i_{t,m}};$$ and
$$u'=x_{i'_{1,1}}x_{i'_{1,2}}\cdots x_{i'_{1,m}}
x_{i'_{2,1}}x_{i'_{2,2}}\cdots x_{i'_{2,m}}\cdots x_{i'_{t,1}}x_{i'_{t,2}}\cdots x_{i'_{t,m}};$$
such that $i_{1,1}\leq i_{1,2}\leq\cdots \leq i_{1,m} \leq i_{2,1}\leq \cdots\leq i_{2,m}\leq\cdots\leq i_{t,1}\leq\cdots\leq  i_{t,m}$ and $i'_{1,1}\leq i'_{1,2}\leq\cdots \leq i'_{1,m} \leq i'_{2,1}\leq \cdots\leq i'_{2,m}\leq\cdots\leq i'_{t,1}\leq\cdots\leq  i'_{t,m}$  and $\{i_{1,\ell},i_{2,\ell},\ldots,i_{t,\ell}\}\in S_t(G)$ and $\{i'_{1,\ell},i'_{2,\ell},\ldots,i'_{t,\ell}\}\in S_t(G)$ for all $1\leq \ell\leq m$. Let $i_{s,k}$ be the smallest  index such that $i_{s,k}\neq i'_{s,k}$ for some $1\leq s\leq t$ and $1\leq k\leq m$. Then for any $1\leq s'<s$ and any $1\leq k'\leq m$, one has $i_{s',k'}=i'_{s',k'}$ and for any $1\leq k'<k$, $i_{s,k'}=i'_{s,k'}$ and $i'_{s,k}<i_{s,k}$. So, if we set $u''=\frac{x_{i'_{s,k}}}{x_{i_{s,k}}}u$, we have
$u''>_{lex} u$, $u'':u=x_{i'_{s,k}}$ and $x_{i'_{s,k}}|u':u$. Hence it remains to prove that $u''\in \mathcal{A}$. Since $u\in \mathcal{A}$, it is sufficient to show that
$S=\{i_{1,k},i_{2,k},\ldots,i_{s-1,k},i'_{s,k},i_{s+1,k},\ldots,i_{t,k}\}\in S_t(G)$. Suppose in contrary that $S$ is not an independent set of vertices of $G$. Since $u,u'\in \mathcal{A}$, there is an integer $s+1 \leq j \leq t$ such that $\{i'_{s,k}, i_{j,k}\}\in E(G)$. Since $i'_{s,k}< i_{s,k}<i_{j,k}$ and $G$ is proper interval, $\{i_{s,k},i_{j,k}\}\in E(G)$ which contradicts to $u\in \mathcal{A}$. So, we have $S\in S_t(G)$ which implies that $u''\in \mathcal{A}$ as desired.
\end{proof}

\end{document}